\definecolor{darkblue}{rgb}{0.0,0,0.7} 
\definecolor{darkred}{rgb}{0.7,0,0} 
\def\defn#1{{\sf #1}}
\newcommand{\edgedir}{\mathbin{\tikz [semithick, baseline=-0.2ex,-latex, ->] \draw [->] (0pt,0.4ex) -- (1em,0.4ex);}} 
\newcommand{\edgedirback}{\mathbin{\tikz [semithick, baseline=-0.2ex,-latex, ->] \draw [<-] (0pt,0.4ex) -- (1em,0.4ex);}} 
\newcommand{\ZZ}{\mathbb Z}
\DeclareMathOperator{\NN}{\mathbb{N}}
\newtheorem{theorem}{Theorem}[section]
\newtheorem{corollary}[theorem]{Corollary}
\newtheorem{Proposition}[theorem]{Proposition}
\newtheorem{Lemma}[theorem]{Lemma}
\theoremstyle{definition}
\newtheorem{Definition}[theorem]{Definition}
\newtheorem{remark}[theorem]{Remark}
\title[Non-reduced reflection factorizations of Coxeter elements]{A note on non-reduced reflection factorizations of Coxeter elements}
\author[P.~Wegener]{Patrick Wegener}
\address{Patrick Wegener, Technische Universit\"at Kaiserslautern, Germany}
\email{wegener@mathematik.uni-kl.de}
\author[S.~Yahiatene]{Sophiane Yahiatene}
\address{Sophiane Yahiatene, Universit\"at Bielefeld, Germany}
\email{syahiate@math.uni-bielefeld.de}
\subjclass[2010]{05E15, 05E18, 20F55}
\keywords{Coxeter groups, Hurwitz action, Reflection factorizations, Coxeter element}
\date{\today}
\begin{document}
\newcolumntype{C}[1]{>{\centering\arraybackslash}m{#1}}

\begin{abstract}
We extend a result of Lewis and Reiner from finite Coxeter groups to all Coxeter groups by showing that two reflection factorizations of a Coxeter element lie in the same Hurwitz orbit if and only if they share the same multiset of conjugacy classes. 
\end{abstract}

\maketitle


\section{Introduction}\label{sec:intro}
Given a Coxeter system $(W,S)$ with set of reflections $T$, the braid group (e.g. see \cite{BDSW14} for a definition) acts on reflection factorizations of a given element $w \in W$, that is it acts on tuples $(t_1, \ldots , t_m) \in T^m$ of reflections such that $w=t_1 \cdots t_m$. This action is called \defn{Hurwitz action}. A standard braid group generator $\sigma_i$ (resp. its inverse $\sigma_i^{-1}$) acts by a \defn{Hurwitz move} on a reflection factorization:
\begin{align*}
\sigma_i (t_1, \ldots , t_{i-1}, t_i, t_{i+1}, t_{i+2}, \ldots , t_n) & = (t_1, \ldots , t_{i-1}, t_{i+1}^{t_i}, t_{i}, t_{i+2}, \ldots , t_n),\\
\sigma_i^{-1} (t_1, \ldots , t_{i-1}, t_i, t_{i+1}, t_{i+2}, \ldots , t_n) & = (t_1, \ldots , t_{i-1}, t_{i+1}, t_{i}^{t_{i+1}}, t_{i+2}, \ldots , t_n),
\end{align*}
where we use the notation $g^h:=hgh^{-1}$ for conjugation. 

It has been first observed by Deligne \cite{Del} that this action is transitive on reduced reflection factorizations of a \defn{Coxeter element} if $W$ is finite. The first published proof is due to Bessis \cite[Proposition 1.6.1]{Bes03}. Igusa and Schiffler showed that this statement is true for every Coxeter group \cite[Theorem 1.4]{IS10}. 

The question of how these results extend to non-reduced reflection factorizations has been first addressed by Lewis and Reiner. 

\begin{theorem}
(Lewis-Reiner, \cite[Theorem 1.1]{LR16}) In a finite real reflection group, two reflection factorizations of a Coxeter element lie in the same Hurwitz orbit if and only if they share the same multiset of conjugacy classes.
\end{theorem}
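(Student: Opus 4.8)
The plan is to prove both implications, the forward one being routine and the reverse one carrying all the weight. For the ``only if'' direction, observe that a single Hurwitz move replaces a consecutive pair $(t_i,t_{i+1})$ by $(t_{i+1}^{t_i},t_i)$, and since $t_{i+1}^{t_i}$ is conjugate to $t_{i+1}$, the \emph{multiset} of conjugacy classes $\{[t_1],\ldots,[t_m]\}$ is left unchanged. Hence this multiset is an invariant of the Hurwitz action, and two factorizations in the same orbit must share it.

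For the ``if'' direction, write $n=\ell_T(c)$; for a Coxeter element this equals $|S|$, and the sign character $W\to\{\pm1\}$ forces every reflection factorization of $c$ to have length $m=n+2k$. I would first establish a \emph{reduction to normal form}: every reflection factorization of $c$ of length $n+2k$ is Hurwitz-equivalent to one of the shape
\[
(c_1,\ldots,c_n,\,u_1,u_1,\,u_2,u_2,\ldots,u_k,u_k),
\]
where $(c_1,\ldots,c_n)$ is reduced and each $(u_i,u_i)$ is a repeated pair. The engine is a length-reduction lemma: any non-reduced factorization is Hurwitz-equivalent to one whose last two entries coincide. Peeling off such a trailing pair and inducting on the length yields the normal form, the point being that the Hurwitz moves used to normalize a prefix extend verbatim to the whole tuple, leaving the already-separated pairs untouched.

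Having reached normal form, I would invoke transitivity of the Hurwitz action on reduced factorizations of $c$ (Igusa--Schiffler) to conclude that the reduced block $(c_1,\ldots,c_n)$ can be sent to any prescribed reduced factorization and that all reduced factorizations share one fixed multiset of conjugacy classes $M_0$. It then remains to manipulate the repeated pairs, for which I need two operations: permuting the pairs among themselves, and replacing a pair $(u_i,u_i)$ by $(u_i',u_i')$ for an arbitrary reflection $u_i'$ conjugate to $u_i$. Both rest on the local identity
\[
(s,u,u)\;\longmapsto\;(u^s,u^s,s),
\]
obtained by two Hurwitz moves, which conjugates a pair by a reflection $s$ at the cost of transporting $s$ to the far side. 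Feeding in the reflections of the reduced block---which, crucially for a Coxeter element, \emph{generate all of $W$}---and iterating then lets me conjugate a pair by an arbitrary element of $W$, reaching every reflection in its class. Finally, given two factorizations with equal conjugacy-class multisets, I reduce both to normal form, match the reduced blocks via Igusa--Schiffler, cancel the common $M_0$, reorder the pairs so their classes agree, and reconjugate each pair to its counterpart.

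The main obstacle I anticipate is the pair-manipulation step, specifically making the conjugation-reachability precise. The local move conjugates a pair only by reflections that can be brought adjacent to it, and each such reflection is ``consumed'' once used, so the delicate point is to show that by repeatedly re-exposing usable reflections---re-generating the reduced block through different reduced factorizations of $c$ and exploiting that these blocks generate $W$---one can realize conjugation by an arbitrary word and thereby sweep out the full conjugacy class of each excess reflection. This is precisely where the hypothesis that $c$ is a Coxeter element (equivalently, that its reduced factorizations generate the whole group) is indispensable, and it is the ingredient that must be handled with care in order to pass from the finite setting of Lewis--Reiner to arbitrary Coxeter groups.
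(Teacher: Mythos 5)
Your architecture --- invariance of the class multiset under single Hurwitz moves, reduction to a normal form consisting of a reduced block followed by repeated pairs, transitivity on the reduced block, and conjugation of the pairs using that the block generates $W$ --- is exactly the skeleton of the paper's proof of Theorem \ref{thm:Main}. The genuine gap is that your ``engine'', the length-reduction lemma (every non-reduced reflection factorization of $c$ is Hurwitz-equivalent to one with two equal entries), is asserted with no argument whatsoever, and this lemma is the entire technical content of the theorem: everything downstream of it is routine bookkeeping. In the finite case the engine is precisely \cite[Corollary 1.4]{LR16}, which Lewis and Reiner obtain from a case-by-case analysis of circuits in root systems; in the present paper it is Lemma \ref{lem:same_refl}, proved by a quite different mechanism: one follows the walk $e,\,t'_1,\,t'_1t'_2,\ldots$ in the Bruhat graph determined by a factorization and, assuming all factorizations in the Hurwitz orbit have pairwise distinct entries, applies the dihedral straightening step (Proposition \ref{prop:BruhatGraphDihedral}, resting on Dyer's theory of reflection subgroups) to remove each ascent-descent pattern; since each replacement strictly decreases the total $\ell_S$-length of the vertices, one eventually reaches a monotone increasing path from $e$, and the deletion condition then produces two equal factors, contradicting the assumption. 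Without this lemma (or Lewis--Reiner's case analysis as a black box, which would defeat the purpose of proving the statement), nothing in your proposal gets off the ground.

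The secondary weak point is the pair-manipulation step. You identify the correct local move $(s,u,u)\sim(u^s,u^s,s)$ and rightly flag the ``consumption'' problem, but your proposed repair --- re-exposing conjugators by re-generating the reduced block through different reduced factorizations --- is left vague, and it is unnecessary. The clean resolution (Lemma \ref{le:Reduction}) is the complementary observation that an entry crosses a repeated pair \emph{without} any conjugation: $(s,a,a)\sim(a,a,s)$, because reflections are involutions, so the double conjugation cancels. Hence one slides $t_i$ from the reduced block all the way to the right end (conjugating the pair and the intervening entries by $t_i$), hops it back across the pair for free, and slides it back to its original slot, undoing the conjugation of everything except the pair. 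The net effect is
\[
(t_1,\ldots,t_n,t,t)\sim(t_1,\ldots,t_n,t^{t_i},t^{t_i}),
\]
with the rest of the tuple restored verbatim, so iterating over the letters of a word realizes conjugation of the pair by an arbitrary element of $\langle t_1,\ldots,t_n\rangle=W$. This is exactly the statement your final matching step needs, and with it (plus the engine lemma) your outline does assemble into the paper's proof.
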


Their proof makes heavy use of a remarkable result for finite Coxeter groups \cite[Corollary 1.4]{LR16}. This result is proved in a case-by-case analysis and seems not to extend to infinite Coxeter groups in general. We prove a similar (but weaker) result for all Coxeter groups (see Lemma \ref{lem:same_refl}). In this way, we obtain that the result of Lewis-Reiner extends to all Coxeter groups. 

\begin{theorem} \label{thm:Main}
Let $(W,S)$ be a Coxeter system of finite rank. Then two reflection factorizations of a Coxeter element in $W$ lie in the same Hurwitz orbit if and only if they share the same multiset of conjugacy classes.
\end{theorem}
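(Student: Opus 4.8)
The forward implication is immediate: a Hurwitz move replaces $(t_i,t_{i+1})$ by $(t_{i+1}^{t_i},t_i)$, and $t_{i+1}^{t_i}$ is conjugate to $t_{i+1}$, so the multiset of conjugacy classes of the letters is constant along any Hurwitz path. So I would concentrate on the converse and first record two invariants of a factorization $(t_1,\dots,t_m)$ of a Coxeter element $c$ that are forced by $c$ alone. Reflection length gives $m\equiv\lt(c)=n\pmod 2$ with $n=|S|$, so I may write $m=n+2k$ and call $k$ the excess. Moreover, for each conjugacy class $K$ of reflections there is a sign character $\chi_K\colon W\to\{\pm1\}$ with $\chi_K(t)=-1$ exactly when $t\in K$ (one for each connected component of the subdiagram of $S$ spanned by the odd bonds); evaluating $\chi_K$ on $c=t_1\cdots t_m$ shows that the number of letters lying in $K$ has the same parity as the number $n_K$ of class-$K$ letters in any reduced factorization of $c$. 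Since every non-reduced factorization can be Hurwitz-shortened by exposing and deleting an adjacent repeated pair, this parity count forces the class multiset $\mathcal C$ of any factorization to contain the (well-defined, by Igusa–Schiffler) reduced class multiset $\mathcal C_0$, with $\mathcal C\setminus\mathcal C_0$ having even multiplicity in every class.

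I would prove the converse by induction on the excess $k$. For $k=0$ the factorization is reduced and the claim is precisely the Igusa–Schiffler transitivity theorem. For $k\ge 1$, take two factorizations $F,F'$ of $c$ with common class multiset $\mathcal C$ and choose a class $K$ occurring in $\mathcal C\setminus\mathcal C_0$. The plan is to transform $F$ and $F'$, by Hurwitz moves, into factorizations that each display an \emph{adjacent equal pair} of class $K$; this is exactly the place where I would invoke Lemma~\ref{lem:same_refl}, the weaker replacement for the Lewis--Reiner input. Having exposed such a pair, I slide it to the end using the elementary identity
\[
(\dots,x,a,a,\dots)\ \sim\ (\dots,a^{x},a^{x},x,\dots),
\]
valid because $x\cdot a\cdot a=x=a^{x}\cdot a^{x}\cdot x$, applied repeatedly. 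This yields $F\sim(G,a,a)$ and $F'\sim(G',a',a')$ with $[a]=[a']=K$, where $G,G'$ are genuine factorizations of $c$ (the trailing pairs multiply to the identity) of excess $k-1$.

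It then remains to connect $(G,a,a)$ and $(G',a',a')$. The same displayed move lets me re-base the trailing pair: writing $a'=a^{w}$ and $w$ as a product of reflections, I feed those reflections one at a time to the slot immediately left of the pair and conjugate, which replaces $a$ by $a'$ at the cost of perturbing $G$ into some excess-$(k-1)$ factorization $G''$ of $c$ with the same class multiset as $G$ (conjugation preserves classes). Bringing reflections into position is arranged with Igusa--Schiffler, recutting the reduced part of the factorization so that a chosen reflection sits next to the pair. Once the trailing pairs agree, $G''$ and $G'$ have the same class multiset and excess $k-1$, so by the inductive hypothesis $G''\sim G'$; applying the connecting moves inside the first $m-2$ coordinates gives $(G'',a',a')\sim(G',a',a')$, and chaining the equivalences yields $F\sim F'$.

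The decisive difficulty, and the point separating the infinite from the finite case, is producing the prescribed adjacent equal pair and realising the required conjugator $w$ through reflections that can actually be slid next to the pair. In a finite reflection group Lewis--Reiner may use that every reflection lies below $c$ in absolute order, so every reflection is reachable and every conjugation is available; in a general Coxeter group this fails, and an arbitrary reflection, or an arbitrary factor of $w$, need not appear in any reduced factorization of $c$. I expect this reachability to be the main obstacle, and Lemma~\ref{lem:same_refl} is precisely the substitute that must supply enough of it: that whenever the excess class $K$ is present, a class-$K$ adjacent equal pair can be exposed, and that two conjugate reflections of class $K$ can be aligned using only the conjugations that the orbit makes available. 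Granting this, the extraction and alignment of a matched class-$K$ pair go through for both $F$ and $F'$, the induction closes, and Theorem~\ref{thm:Main} follows.
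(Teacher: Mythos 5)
Your overall strategy matches the paper's quite closely: put both factorizations into the normal form ``reduced prefix followed by adjacent equal pairs'' via Lemma~\ref{lem:same_refl}, settle the excess-zero case by Igusa--Schiffler (in the form of \cite[Theorem 1.3]{BDSW14}), and induct on the excess by aligning one trailing pair at a time. The forward direction and the class-multiset bookkeeping are fine. However, the step you yourself call the ``decisive difficulty'' --- re-basing a trailing pair, i.e.\ passing from $(G,a,a)$ to something ending in $(a',a')$ where $a'=a^{w}$ --- is precisely the step you do not prove. You assert that conjugate reflections ``can be aligned using only the conjugations that the orbit makes available'' and that Lemma~\ref{lem:same_refl} ``must supply enough of it,'' but Lemma~\ref{lem:same_refl} supplies no such thing: it only produces the normal form. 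Since you close with ``Granting this, \dots the induction closes,'' the hardest point of the theorem is assumed rather than proved, so there is a genuine gap.

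The gap is filled by two observations you do not make. First (the paper's Lemma~\ref{le:Reduction}): for any $w\in\langle t_1,\dots,t_n\rangle$ one has $(t_1,\dots,t_n,t,t)\sim(t_1,\dots,t_n,t^{w},t^{w})$. To conjugate the pair by a single factor $t_i$, shuttle $t_i$ to the end, slide it past the pair, and shuttle it back; this restores the prefix \emph{exactly}, so no perturbed factorization $G''$, no appeal to induction for repairing the prefix, and no ``recutting via Igusa--Schiffler'' is needed --- one just iterates over a word for $w$ in the letters $t_1,\dots,t_n$. Second, and crucially, the required conjugator is always available: the subgroup generated by the entries of a tuple is a Hurwitz invariant, and since any reduced reflection factorization $(r_1,\dots,r_n)$ of the Coxeter element is Hurwitz-equivalent to $(s_1,\dots,s_n)$ by \cite[Theorem 1.3]{BDSW14}, we get $\langle r_1,\dots,r_n\rangle=\langle s_1,\ldots,s_n\rangle=W$. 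Hence \emph{every} $w$ with $a^{w}=a'$ lies in $\langle r_1,\dots,r_n\rangle$, and Lemma~\ref{le:Reduction} applies verbatim. Your worry that ``an arbitrary factor of $w$ need not appear in any reduced factorization of $c$'' is a red herring: one does not factor $w$ into arbitrary reflections, one writes it as a word in the prefix entries $r_1,\dots,r_n$ themselves. With these two observations your induction closes and your argument becomes, in essence, the paper's proof.
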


\newpage

\section{The proof}
Throughout this note let $(W,S)$ be a \defn{Coxeter system} of finite rank $n \in \NN$ with set of \defn{reflections} $T=\{ wsw^{-1} \mid w \in W,~s \in S\}$. All necessary definitions and facts about Coxeter groups we will use are covered by \cite{Hu90}.

A subgroup $W'$ of $W$ is called \defn{reflection subgroup} if $W'= \langle W' \cap T \rangle$. Each reflection subgroup $W'$ admits a canonical set of generators $\chi(W')$ such that $(W', \chi(W'))$ is a Coxeter system and the set of reflections for $(W', \chi(W'))$ is given by $W' \cap T = \bigcup_{w \in W'} w \chi(W') w^{-1}$ (see \cite[Theorem 1.8]{Dye87}). A reflection subgroup of the form $\langle I \rangle$ for some $I \subseteq S$, is called \defn{parabolic subgroup}.

Let $S =\{ s_1, \ldots , s_n \}$. Then $c= s_{\pi(1)} \cdots s_{\pi(n)}$ is called \defn{Coxeter element} for each permutation $ \pi \in \text{Sym}(n)$. A Coxeter element of a parabolic subgroup is also called \defn{parabolic Coxeter element}.

We denote by $\ell_S$ (resp. $\ell_T$) the length function on $W$ with respect to the generating set $S$ (resp. $T$).

\begin{Definition} \label{def:BruhatGraph}
We define the \defn{Bruhat graph} of $(W,S)$ to be the directed graph $\Omega_{(W,S)}$ on vertex set $W$ and there is a directed edge from $x$ to $y$ if there exists $t \in T$ such that $y=xt$ and $\ell_S(x) < \ell_S(y)$.

Moreover, we denote by $\overline{\Omega}_{(W,S)}$ the corresponding undirected graph and for a subset $X \subseteq W$ we denote by $\Omega_{(W,S)}(X)$ the full subgraph of $\Omega_{(W,S)}$ on vertex set $X$.
\end{Definition}

The following fact is already part of the proof of \cite[Proposition 2.2]{BDSW14}. For sake of completeness we include a proof (which can also be found in the first authors Ph.D. thesis \cite[Proposition 2.3.6]{Weg17}).

\begin{Proposition} \label{prop:BruhatGraphDihedral}
Let $w \in W$ and $t_1,t_2 \in T$ with $t_1 \neq t_2$ such that 
$$ w \edgedir wt_1 \edgedirback wt_1t_2$$
in $\Omega_{(W,S)}$. Then there exist $t_1', t_2 ' \in \langle t_1, t_2 \rangle \cap T$ with $t_1t_2 = t_1' t_2'$ such that one of the following cases hold:
\begin{enumerate}
\item $ w \edgedir wt_1' \edgedir wt_1't_2'= wt_1t_2$
\item $ w \edgedirback wt_1' \edgedirback wt_1't_2'= wt_1t_2$
\item $ w \edgedirback wt_1' \edgedir wt_1't_2'= wt_1t_2$
\end{enumerate}
\end{Proposition}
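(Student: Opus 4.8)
The plan is to observe that the entire statement takes place inside the coset $wD$, where $D := \langle t_1, t_2 \rangle$, and to transport the question into the dihedral group $D$ itself, where it becomes an elementary length computation. First note that the three admissible configurations (1)--(3) are exactly the three ways of orienting the two edges at the middle vertex $wt_1'$ \emph{other than} $w \edgedir wt_1' \edgedirback wt_1't_2'$; that is, the conclusion asserts precisely that $t_1t_2$ can be refactored so that the middle vertex is not a strict local maximum (``peak'') for $\ell_S$. Since every factorization $t_1t_2 = t_1't_2'$ with $t_1',t_2' \in D \cap T$ fixes the two endpoints $w$ and $wt_1t_2$ and only moves the middle vertex, the whole problem is to produce one factorization whose middle vertex is not a peak.

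For the reduction, recall from \cite{Dye87} that $D = \langle t_1, t_2 \rangle$ is a (finite or infinite) dihedral reflection subgroup with canonical generators $\chi(D)$ and reflection set $D \cap T$; write $\ell_D$ for its length function. Let $d$ be the unique element of minimal $\ell_S$-length in the coset $wD$ and write $w = dz$ with $z \in D$. The key input is the length-additivity for cosets of reflection subgroups, due to Dyer, namely $\ell_S(dv) = \ell_S(d) + \ell_D(v)$ for all $v \in D$. Applying this to $v = zv'$ gives $\ell_S(wv') = \ell_S(d) + \ell_D(zv')$ for every $v' \in D$, so on elements of $wD$ the comparison of $\ell_S$ is governed entirely by the comparison of $\ell_D$ on the corresponding elements of $D$. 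In other words, the induced directed subgraph $\Omega_{(W,S)}(wD)$ is isomorphic to the Bruhat graph of $(D,\chi(D))$ via $wv' \mapsto zv'$, and it suffices to solve the problem for the vertices $z$, $zt_1$, $zt_1t_2$ in the dihedral group $D$.

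Inside $D$ the computation is transparent. Set $p := t_1t_2$; since $t_1 \neq t_2$ this is a nontrivial rotation, so $p \neq e$. For any reflection $r \in D \cap T$ one also has $rp \in D \cap T$ (a reflection times a rotation is a reflection), so the factorizations $p = t_1't_2'$ are exactly $t_1' = r$, $t_2' = rp$ with $r$ ranging over $D \cap T$; correspondingly the middle vertex $zt_1' = zr$ ranges over all reflection-neighbours of $z$. Now let $x \in \{z, zp\}$ be an endpoint of maximal length $M := \max(\ell_D(z),\ell_D(zp))$. Because $p \neq e$, the endpoints cannot both be the identity, so $M \geq 1$ and $x \neq e$; hence $x$ admits a right descent, i.e.\ a simple reflection $r'' \in \chi(D)$ with $\ell_D(xr'') = M - 1$. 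Writing $xr'' = zr$ with $r \in D \cap T$ (taking $r = r''$ if $x = z$ and $r = pr''$ if $x = zp$), the middle vertex $zr$ has length $M - 1 < M$ and is therefore not a peak. Setting $t_1' = r$ and $t_2' = rp$ thus lands in one of the cases (1)--(3).

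The only substantive ingredient is the reduction of the second paragraph: verifying that length comparisons on the coset $wD$ are faithfully recorded by the intrinsic dihedral length $\ell_D$, equivalently that $\Omega_{(W,S)}(wD)$ coincides with the dihedral Bruhat graph. This rests on Dyer's coset length-additivity for reflection subgroups and is where all the Coxeter-theoretic content sits; once it is in place, the dihedral argument above is purely combinatorial and uniform in the (finite or infinite) order of $D$.
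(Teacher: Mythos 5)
Your overall architecture is the same as the paper's: pass to the coset $w\langle t_1,t_2\rangle$, identify its induced subgraph in $\Omega_{(W,S)}$ with the Bruhat graph of the dihedral system $(D,\chi(D))$, and then settle the claim inside the dihedral group. The dihedral half of your argument is correct, and in fact a bit cleaner than the paper's: instead of the three-way case distinction on $\ell(z)$ versus $\ell(zp)$ with a parity/intermediate-value argument, you take a descent of the longer endpoint, which uniformly produces a middle vertex that is not a peak. Your identification of the conclusion with ``$t_1t_2$ can be refactored so that the middle vertex is not a local maximum'' is also exactly right.

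However, the justification you give for the reduction step contains a genuine error. The ``length-additivity'' you attribute to Dyer, namely $\ell_S(dv) = \ell_S(d) + \ell_D(v)$ for all $v \in D$, is false for general reflection subgroups: it cannot hold, since $\ell_D$ counts letters from $\chi(D)$ while $\ell_S$ counts letters from $S$, and the canonical generators $\chi(D)$ need not be simple reflections of $W$. Concretely, take $W = S_4$ with $S$ the adjacent transpositions, $t_1 = (1\,3)$, $t_2 = (2\,4)$, so that $D = \langle t_1, t_2 \rangle$ is dihedral of order $4$ with $\chi(D) = \{(1\,3), (2\,4)\}$. For the coset $eD$ the minimal element is $d = e$, and with $v = (1\,3)$ one gets $\ell_S(dv) = 3$ while $\ell_S(d) + \ell_D(v) = 0 + 1 = 1$; likewise $\ell_S\bigl((1\,3)(2\,4)\bigr) = 4 \neq 2$. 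What Dyer actually proves --- and what the paper cites as \cite[Proposition 1.13]{Dye87} --- is the weaker, purely order-theoretic statement that $\Omega_{(W,S)}(wD) \cong \Omega_{(D,\chi(D))}$: the map $dv \mapsto v$ preserves edges and their orientations, i.e.\ length \emph{comparisons} between coset elements differing by a reflection of $D$, not the lengths themselves. Since your dihedral argument only ever uses orientations (descents), your proof is repaired simply by invoking this isomorphism directly in place of the false additivity; but as written, the step on which, by your own account, ``all the Coxeter-theoretic content sits'' is not correct.
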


\begin{proof}
Let $W':= \langle t_1, t_2 \rangle$ and $S':= \chi(W')$. We consider the coset $wW'$ since $w, wt_1, wt_1t_2 \in wW'$. By \cite[Proposition 1.13]{Dye87} we have 
$$\Omega_{(W,S)}(W') \cong \Omega_{(W,S)}(wW') \cong \Omega_{(W',S')},$$
where $(W',S')$ is dihedral and we can check the claim there directly. Inside $W'$ any reflection (element of odd $S'$-length) and any rotation (element of even $S'$-length) are joined by an edge in $\overline{\Omega}_{(W',S')}$ which in $\Omega_{(W',S')}$ is oriented towards the element of greater $S'$-length. For $x \in W'$ there are three possible situations:
\begin{itemize}
\item $\ell_{S'}(x)< \ell_{S'}(xt_1t_2)$
\item $\ell_{S'}(x)> \ell_{S'}(xt_1t_2)$
\item $\ell_{S'}(x)= \ell_{S'}(xt_1t_2)$ (in particular $x \neq e$ since $t_1 \neq t_2$).
\end{itemize}
Hence we can choose $t_1', t_2' \in W' \cap T$ with $t_1't_2' = t_1t_2$ in the three situations such that we have one of the following situations:
\begin{itemize}
\item $x \edgedir xt_1' \edgedir xt_1' t_2'$
\item $x \edgedirback xt_1' \edgedirback xt_1' t_2'$
\item $x \edgedirback xt_1' \edgedir xt_1' t_2'$
\end{itemize}
To see this, note that $x$ and $xt_1t_2$ are both either reflections or rotations. Therefore both are either of odd or even $S'$-length. Thus $\ell_{S'}(x) < \ell_{S'}(xt_1t_2)$ implies $\ell_{S'}(x) +2 \leq \ell_{S'}(xt_1t_2)$ and we find $t_1'$ with $\ell_{S'}(x) < \ell_{S'}(xt_1') < \ell_{S'}(xt_1t_2)$. By setting $t_2' := t_1't_1t_2$ we obtain $x \edgedir xt_1' \edgedir xt_1' t_2'$ and $t_1't_2'=t_1t_2$. The remaining cases are similar.
\end{proof}

\medskip
We use the notation $(t_1, \ldots, t_m) \sim (r_1, \ldots , r_m)$ to indicate that both tuples lie in the same orbit under the Hurwitz action.

\begin{Lemma}\label{lem:same_refl}
Let $w\in W$ with $\ell_{S}(w)=m$ and $w=t_{1}\cdots t_{m+2k}$ with $t_{i}\in T$ for $1\leq i \leq m+2k$ and some $k \in \NN$. Then there exists a braid $\sigma \in \mathcal{B}_{m+2k}$ such that 
$$
\sigma(t_{1},\ldots,t_{m+2k})=(r_{1},\ldots,r_{m},r_{i_1},r_{i_1}, \ldots , r_{i_k},r_{i_k}).
$$
\end{Lemma}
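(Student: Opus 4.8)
The plan is to induct on $k$. For $k=0$ the claim is vacuous (take $\sigma=\idop$, since the tuple already has the required shape with no repeated pairs), so it suffices to show, for $k\geq 1$, that any factorization $w=t_{1}\cdots t_{m+2k}$ can be carried by the Hurwitz action to one containing two \emph{adjacent equal} reflections $(\ldots,r,r,\ldots)$. Once such a pair is produced, I would slide it to the right end: a short computation with the given conventions gives $\sigma_{i}\sigma_{i+1}(\ldots,r,r,s,\ldots)=(\ldots,s,r,r,\ldots)$, so the pair can be pushed past each reflection to its right until it occupies the last two positions. Deleting it leaves a reflection factorization of $w$ of length $m+2(k-1)$ (the pair multiplies to $e$), to which the inductive hypothesis applies; re-appending $(r,r)$ and extending the braid by the identity on the last two strands yields the desired normal form.

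To produce an adjacent equal pair I would pass to the Bruhat walk of the factorization: set $w_{0}=e$ and $w_{i}=t_{1}\cdots t_{i}$, so that consecutive vertices are joined by an edge of $\Omega_{(W,S)}$ oriented according to $\ell_S$. Since multiplication by a reflection changes $\ell_S$ by a nonzero (odd) amount, every step is strictly increasing (``up'') or strictly decreasing (``down''), and the first step $e\edgedir t_{1}$ is up. If every step were up we would obtain $\ell_S(w)\geq m+2k>m$, a contradiction; hence at least one step is down. Reading the walk from the left, the first down-step is preceded by an up-step, producing a \emph{peak} $w_{p-2}\edgedir w_{p-1}\edgedirback w_{p}$ with associated reflections $t_{p-1},t_{p}$. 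If $t_{p-1}=t_{p}$ we are done; otherwise $t_{p-1}\neq t_{p}$ and Proposition~\ref{prop:BruhatGraphDihedral} applies.

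Proposition~\ref{prop:BruhatGraphDihedral} replaces $(t_{p-1},t_{p})$ by a pair $(t',t'')\in\langle t_{p-1},t_{p}\rangle\cap T$ with $t't''=t_{p-1}t_{p}$ realizing one of the patterns up-up, down-down or down-up. In each of these three patterns the new middle vertex $w_{p-2}t'$ lies strictly below the old peak $w_{p-1}$ in $\ell_S$, so the quantity $A:=\sum_{i=0}^{m+2k}\ell_S(w_{i})$ strictly decreases. Crucially, this replacement is itself a Hurwitz move: in the dihedral group $\langle t_{p-1},t_{p}\rangle$ every length-two reflection factorization of the rotation $\rho=t_{p-1}t_{p}$ lies in the single Hurwitz orbit of $(t_{p-1},t_{p})$ (a direct computation gives $\sigma_{1}^{\,j}(t_{p-1},t_{p})=(\rho^{j}t_{p-1},\rho^{j-1}t_{p-1})$, and as $j$ ranges over $\ZZ$ these exhaust all such factorizations). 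Because $A$ is a nonnegative integer, this flattening cannot be iterated indefinitely; it can only terminate when the peak guaranteed above consists of two equal reflections, i.e.\ when the factorization already contains an adjacent equal pair. This produces the required pair and closes the induction.

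I expect the main obstacle to be the bookkeeping that welds the three strands together: confirming that the combinatorial flattening of Proposition~\ref{prop:BruhatGraphDihedral} is genuinely implemented by Hurwitz moves (the dihedral orbit computation), and verifying that the monovariant $A$ both strictly decreases under every flattening and can stabilize only at a configuration with a repeated adjacent reflection. The slide of the equal pair to the end and the decrement of $k$ are then routine.
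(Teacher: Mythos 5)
Your proof is correct and takes essentially the same route as the paper: induction on $k$, flattening peaks of the Bruhat walk via Proposition~\ref{prop:BruhatGraphDihedral} with the sum of the $\ell_S$-lengths of the vertices as a strictly decreasing monovariant, and sliding an adjacent equal pair to the right end before deleting it. The only organizational difference is that you argue termination directly (the flattening can only halt at a peak with two equal reflections, since a walk with no down-step would force $\ell_S(w)\geq m+2k>m$), whereas the paper assumes all factorizations in the Hurwitz orbit have pairwise distinct entries and derives a contradiction via an auxiliary lemma; you also spell out the dihedral Hurwitz-orbit computation that the paper leaves implicit.
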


\begin{proof}
We proceed by induction on $k$. Therefore let $k=1$. If there exists a factorization in $\mathcal{B}_{m+2}(t_{1},\ldots,t_{m+2})$ with two identical factors, then we can can shift them to the end of the factorization by just using Hurwitz moves and we are done. Hence let us assume to the contrary that each factorization in $\mathcal{B}_{m+2}(t_{1},\ldots,t_{m+2})$ consists of pairwise different factors. Consider the path of $\Omega_{(W,S)}$ starting in $e$ and ending in $w$ induced by $(t_{1},\ldots,t_{m+2})$. Then Proposition \ref{prop:BruhatGraphDihedral} allows us to replace successively the parts of the path of shape $\star \edgedir \star \edgedirback \star$ by
$$
\star \edgedir \star \edgedir \star, ~\star \edgedirback \star \edgedirback \star, ~\text{or } \star \edgedirback \star \edgedir \star
$$ 
only using the Hurwitz action. The latter is possible since the reflections of the factorizations in the Hurwitz orbit are pairwise different. Since each replacement reduces the sum of the length of the vertices, eventually we get after finitely many replacements a path of the form
\[e\edgedirback t'_{1}\edgedirback t'_{1}t'_{2}\edgedirback \ldots \edgedirback t'_{1}t'_{2}\cdots t'_{p}\edgedir t'_{1}t'_{2}\cdots t'_{p}t'_{p+1}\edgedir \ldots \edgedir t'_{1}\cdots t'_{m+2}=w\]
with $t'_{i}\in T$ for $1\leq i \leq m+2$, that is, the path is first decreasing, then increasing. Since the path starts with $e$, it holds $p=0$ and therefore it has no decreasing part. Altogether it holds that the initial path can be transformed to
\[e \edgedir t'_{1}\edgedir t'_{1}t'_{2}\edgedir \ldots \edgedir t'_{1}\cdots t'_{m+2}=w\]
by using the Hurwitz action. Since the length of $w$ is $m$, the deletion condition yields that $e=wt'_{m+2}\cdots t'_{3}=t'_{1}t'_{2}$, a contradiction to the assumption.

Let $k>1$. If there exists a factorization $(r_1, \ldots , r_{m+k})$ in $\mathcal{B}_{m+2k}(t_1, \ldots , t_{m+2k})$ with $r_i =r_j$ for some $i, j \in \{ 1, \ldots , m+2k \}$ with $i\neq j$, then
\begin{align} \label{equ:HurwitzEquiv}
(t_1, \ldots , t_{m+2k}) \sim (r_1, \ldots , r_{m+k}) \sim (r_1', \ldots, r_{m+2(k-1)}', r_i, r_i)
\end{align}
and we are done by induction. 

Therefore assume again that each factorization in $\mathcal{B}_{m+2k}(t_{1},\ldots,t_{m+2k})$ consists of pairwise different factors. We can argue as before to obtain 
\[e \edgedir t'_{1}\edgedir t'_{1}t'_{2}\edgedir \ldots \edgedir t'_{1}\cdots t'_{m+2k}=w\]
by using the Hurwitz action. In this case the deletion condition yields $e=t_1' \cdots t_{2k}'$. The assertion follows by the following Lemma.
\end{proof}

\begin{Lemma}
If $e = t_1 \cdots t_{2n}$ for some $n \in \NN$ and $t_i \in T$ for all $i \in \{ 1, \ldots , 2n \}$, then there exist reflections $r_1, \ldots , r_n \in T$ such that $(t_1,\ldots t_{2n}) \sim (r_1, r_1, \ldots , r_n , r_n )$. 
\end{Lemma}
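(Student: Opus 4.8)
The plan is to induct on $n$. For the base case $n=1$, the relation $e=t_1t_2$ forces $t_2=t_1^{-1}=t_1$ (reflections are involutions), so $(t_1,t_2)=(t_1,t_1)$ already has the desired shape. For the inductive step I claim it suffices to exhibit, somewhere in the Hurwitz orbit of $(t_1,\dots,t_{2n})$, a factorization in which two of the entries coincide. Indeed, suppose $(t_1,\dots,t_{2n})\sim(u_1,\dots,u_{2n})$ with $u_i=u_j=:r$ for some $i<j$. Moving $u_j$ leftward by repeated inverse Hurwitz moves $\sigma_k^{-1}(\dots,a,b,\dots)=(\dots,b,a^{b},\dots)$ keeps a copy of $r$ and slides it until it sits next to $u_i=r$, producing an equivalent factorization with an adjacent block $(r,r)$. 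A short computation shows that such a block passes through any reflection on its right unchanged, since $(r,r,x)\sim(x,r,r)$: applying $\sigma$ to the last two entries gives $(r,x^{r},r)$ and then $\sigma$ to the first two gives $((x^{r})^{r},r,r)=(x,r,r)$, using $(x^{r})^{r}=x$. Sliding the block to the very end yields $(t_1,\dots,t_{2n})\sim(v_1,\dots,v_{2n-2},r,r)$, and since Hurwitz moves preserve the product and $rr=e$, the prefix satisfies $v_1\cdots v_{2n-2}=e$. The induction hypothesis rewrites this prefix as $n-1$ repeated pairs, and appending $(r,r)$ completes the step.

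The heart of the argument is therefore producing a repeated reflection, and here I would reuse the straightening technique from the proof of Lemma \ref{lem:same_refl}. Assume for contradiction that every factorization in the Hurwitz orbit has pairwise distinct entries, and consider the closed path $e=w_0,w_1,\dots,w_{2n}=e$ in the Bruhat graph $\Omega_{(W,S)}$ given by the partial products $w_i=t_1\cdots t_i$. Using Proposition \ref{prop:BruhatGraphDihedral} one replaces, one at a time, each local pattern $\star\edgedir\star\edgedirback\star$ by one of the three alternatives of that proposition; this is legitimate precisely because all occurring reflections are distinct, and each replacement strictly lowers $\sum_i \ell_S(w_i)$, so the process terminates in a path that is first decreasing and then increasing with respect to $\ell_S$. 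Since $w_0=e$ already has minimal length $0$, the decreasing part must be empty, so the path is strictly $\ell_S$-increasing; but then $0=\ell_S(w_0)<\ell_S(w_{2n})=\ell_S(e)=0$, a contradiction. Hence some factorization in the orbit has two equal entries, which is exactly what the reduction above requires.

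I expect the main obstacle to be the subtle dependence of the straightening on the distinctness hypothesis: Proposition \ref{prop:BruhatGraphDihedral} only guarantees reflections $t_1',t_2'\in\langle t_1,t_2\rangle\cap T$ with the correct product and Bruhat orientation, and to realize the corresponding replacement by genuine Hurwitz moves one needs the relevant dihedral Hurwitz orbit to be rich enough, which is ensured exactly by the assumption that no two factors agree. One must also take care that the reduction is not circular: the case $w=e$ of Lemma \ref{lem:same_refl} cannot simply be quoted here, since the proof of that lemma invokes the present statement, so the induction on $n$ must stand on its own and appeal only to strictly smaller instances of this lemma.
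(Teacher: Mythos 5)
Your proposal is correct and follows essentially the same route as the paper: induct on $n$, reduce to the case where some factorization in the Hurwitz orbit has a repeated entry (then slide the pair to the end and invoke the inductive hypothesis), and otherwise use the straightening argument via Proposition \ref{prop:BruhatGraphDihedral} to produce a strictly $\ell_S$-increasing path from $e$ to $e$ of positive length, a contradiction. Your added care about non-circularity (not quoting Lemma \ref{lem:same_refl} itself, only its technique) and the explicit Hurwitz moves for sliding the block $(r,r)$ are exactly the details the paper leaves implicit.
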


\begin{proof}
The assertion is clear for $n=1$. Therefore let $n >1$. If there exists a factorization in $\mathcal{B}_{2n}(t_1,\ldots t_{2n})$ with two identical factors, then we can argue as in the proof of Lemma \ref{lem:same_refl} to obtain a Hurwitz equivalence similar to (\ref{equ:HurwitzEquiv}) and we are done by induction. Therefore assume that each factorization in $\mathcal{B}_{2n}(t_1,\ldots t_{2n})$ has pairwise different factors. Again, with the replacement argument as in the first part of the proof of Lemma \ref{lem:same_refl}, we obtain an increasing path
\[e \edgedir t'_{1}\edgedir t'_{1}t'_{2}\edgedir \ldots \edgedir t'_{1}\cdots t'_{2n}=e\]
from $e$ to $e$ in $\Omega_{(W,S)}$ of length $2n>0$, a contradiction. 
\end{proof}

\begin{remark}
Note that $\ell_T(w) \leq \ell_S(w)$ for all $w \in W$. Therefore the reflection factorization $w = r_1 \cdots r_m$ obtained by Lemma \ref{lem:same_refl} does not have to be a reduced reflection factorization. This is the main difference compared with the key argument \cite[Corollary 1.4]{LR16} in the proof of Lewis-Reiner. However, by \cite[Lemma 2.1]{BDSW14} we have $\ell_S(w)=\ell_T(w)$ for an element $w \in W$ if and only if $w$ is a parabolic Coxeter element. Therefore, if $w$ is a parabolic Coxeter element, then Lemma \ref{lem:same_refl} generalizes \cite[Corollary 1.4]{LR16}. In particular, a reflection factorization of a parabolic Coxeter element can be reduced by just using Hurwitz moves.
\end{remark}

\medskip
A proof of the following fact already implicitely appears in the proof of \cite[Theorem 1.1]{LR16}.

\begin{Lemma} \label{le:Reduction}
Let $t_1, \ldots, t_{n}, t \in T$. Then $(t_1, \ldots, t_{n},t,t) \sim (t_1, \ldots, t_{n}, t^w,t^w)$ for all $w \in \langle t_1, \ldots, t_{n} \rangle$. 
\end{Lemma}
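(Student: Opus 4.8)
The plan is to reduce everything to a single elementary move---conjugating the trailing pair by one reflection---and then to bootstrap. Since every reflection is an involution, any $w\in\langle t_1,\dots,t_n\rangle$ can be written as a word $w=t_{i_1}\cdots t_{i_\ell}$ in the generators, and conjugation by a product is iterated conjugation, $t^{w}=(\cdots(t^{t_{i_\ell}})\cdots)^{t_{i_1}}$. Hence it suffices to prove the single-generator statement
\[
(t_1,\dots,t_n,r,r)\sim(t_1,\dots,t_n,r^{t_j},r^{t_j})\qquad(1\le j\le n)
\]
for an arbitrary reflection $r$: applying it $\ell$ times, starting from $r=t$ and peeling off the letters of $w$ from the inside out, then yields the claim, while at each stage the first $n$ entries stay equal to $t_1,\dots,t_n$. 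So I would isolate this single-generator reduction as the core of the argument.

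The key computation is conjugation by the reflection sitting immediately to the left of the pair. For reflections $a_1,\dots,a_k,r$ I would check directly, using the move $\sigma_i(\dots,x,y,\dots)=(\dots,y^{x},x,\dots)$ together with the involution identity $(y^{x})^{x}=y$, that
\[
(a_1,\dots,a_k,r,r)\sim(a_1,\dots,a_k,r^{a_k},r^{a_k}).
\]
Concretely, I slide the two copies of $r$ leftward past $a_k$ by applying $\sigma_k^{-1}$ and then $\sigma_{k+1}^{-1}$: the first copy turns $a_k$ into $a_k^{r}$, the second turns $a_k^{r}$ back into $a_k$, landing at $(a_1,\dots,a_{k-1},r,r,a_k)$. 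Then two further inverse moves $\sigma_{k+1}^{-1},\sigma_k^{-1}$ slide $a_k$ back leftward through both copies of $r$, conjugating each to $r^{a_k}$ and arriving at $(a_1,\dots,a_{k-1},a_k,r^{a_k},r^{a_k})$. The point worth recording here is that this braid changes only the last two entries and returns $a_1,\dots,a_k$ to themselves.

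To conjugate by an \emph{arbitrary} generator $t_j$ rather than by the last entry, I would transport $t_j$ into the slot adjacent to the pair and back. Applying $\sigma_j,\sigma_{j+1},\dots,\sigma_{n-1}$ in turn moves $t_j$ rightward into position $n$ and replaces the intervening entries by their $t_j$-conjugates $t_{j+1}^{t_j},\dots,t_n^{t_j}$; the previous step then conjugates the pair by this now-adjacent $t_j$; finally the inverse braid $\sigma_{n-1}^{-1}\cdots\sigma_j^{-1}$ restores $t_1,\dots,t_n$ exactly. This restoration is the part I would be most careful about, and it is the only real obstacle: it works precisely because the Hurwitz action is a braid-group action (so the transporting braid and its inverse cancel on positions $1,\dots,n$), because the middle computation returns position $n$ to $t_j$ and never disturbs positions $1,\dots,n-1$, and because all of these moves act only on positions $\le n$, so the trailing pair at positions $n+1,n+2$ is carried along untouched. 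Combining the three steps---the product-to-iterated-conjugation reduction, the adjacent-reflection computation, and the transport argument---gives the lemma, the whole difficulty being the bookkeeping that ensures the auxiliary factors come back to exactly $t_1,\dots,t_n$ rather than merely to conjugates of themselves.
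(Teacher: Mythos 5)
Your proposal is correct and follows essentially the same route as the paper: the paper's proof likewise transports a generator $t_i$ rightward (conjugating the intervening entries), uses the involution identity to conjugate the trailing pair while returning $t_i$, and then transports it back to restore $t_1,\ldots,t_n$, with the iteration over the letters of a word for $w$ left implicit. Your version merely makes that word-by-word reduction explicit and stops $t_j$ just before the pair rather than sliding it past, which is an immaterial difference in the choice of Hurwitz moves.
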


\begin{proof}
Denoting an omitted entry by $\widehat{t_i}$, we obtain
\begin{align*}
(t_1, \ldots, t_{n},t,t) & \sim (t_1, \ldots, t_{i-1},\widehat{t_i}, t_{i+1}^{t_i} \ldots , t_{n}^{t_i}, t^{t_i},t^{t_i}, t_i)\\
{} & \sim (t_1, \ldots, t_{i-1},\widehat{t_i}, t_{i+1}^{t_i} \ldots , t_{n}^{t_i}, t_i, t^{t_i},t^{t_i})\\
{} & \sim (t_1, \ldots,t_{i-1},t_{i}, t_{i+1},\ldots, t_{n},t^{t_i},t^{t_i}).
\end{align*}
\end{proof}

\medskip
\begin{proof}[\textbf{Proof of Theorem \ref{thm:Main}}]
Let $c \in W$ be a Coxeter element and 
$$
c=t_1' \cdots t_{n+2k}' = r_{1}' \cdots r_{n+2k}'
$$
two reflection factorizations of $c$ for some $k \in \ZZ_{\geq 0}$ such that they share the same multiset of conjugacy classes. By Lemma \ref{lem:same_refl} we have 
\begin{align*}
(t_1', \ldots, t_{n+2k}') & \sim (t_1, \ldots , t_n, t_{i_1}, t_{i_1}, \ldots , t_{i_k}, t_{i_k})\\
\text{and }(r_1', \ldots, r_{n+2k}') & \sim (r_1, \ldots , r_n, r_{i_1}, r_{i_1}, \ldots , r_{i_k}, r_{i_k}).
\end{align*}
Since $c=t_1 \cdots t_n=r_1 \cdots r_n$ and $\ell_S(c)=\ell_T(c)=n$ by \cite[Lemma 2.1]{BDSW14}, $(t_1 \ldots t_n)$ and $(r_1 \cdots r_n)$  are reduced reflection factorizations of $c$. Hence we have $(t_1, \ldots, t_n) \sim (r_1, \ldots r_n)$ by \cite[Theorem 1.3]{BDSW14}. In particular $(t_1, \ldots, t_n)$ and $(r_1, \ldots, r_n)$ share the same multiset of conjugacy classes. Hence $t_{i_1}, \ldots, t_{i_k}$ and $r_{i_1}, \ldots, r_{i_k}$ have to share the same multiset of conjugacy classes. Since $(t,t,r,r) \sim (r,r,t,t)$ for all $r,t \in T$, we can assume after a possible renumbering that there exists $w_j \in W$ such that $t_{i_j}^{w_j}=r_{i_j}$ for all $j \in \{1, \ldots, k \}$. We proceed by induction on $k$. As we have seen above, the case $k=0$ is precisely \cite[Theorem 1.3]{BDSW14}. Therefore let $k>0$. By induction we have
\begin{align*}
(t_1, \ldots , t_n, t_{i_1}, t_{i_1}, \ldots,t_{i_{k-1}}, t_{i_{k-1}} , t_{i_k}, t_{i_k}) \sim (r_1, \ldots , r_n, r_{i_1}, r_{i_1}, \ldots,r_{i_{k-1}}, r_{i_{k-1}} , t_{i_k}, t_{i_k})
\end{align*}
As a consequence of \cite[Theorem 1.3]{BDSW14}, we have $W=\langle r_1, \ldots , r_n \rangle$. By what we have pointed out before, there exists $w_k \in \langle r_1, \ldots , r_n \rangle $ such that $t_{i_k}^{w_k} = r_{i_k}$. We conclude
\begin{align*}
(t_1', \ldots, t_{n+2k}') & \stackrel{\phantom{\ref{le:Reduction}}}{\sim} (r_1, \ldots , r_n, r_{i_1}, r_{i_1}, \ldots,r_{i_{k-1}}, r_{i_{k-1}} , t_{i_k}, t_{i_k})\\
{} & \stackrel{\phantom{\ref{le:Reduction}}}{\sim} (r_1, \ldots , r_n,t_{i_k}, t_{i_k}, r_{i_1}, r_{i_1}, \ldots,r_{i_{k-1}}, r_{i_{k-1}})\\
{} & \stackrel{\ref{le:Reduction}}{\sim} (r_1, \ldots , r_n,t_{i_k}^{w_k}, t_{i_k}^{w_k}, r_{i_1}, r_{i_1}, \ldots,r_{i_{k-1}}, r_{i_{k-1}})\\
{} & \stackrel{\phantom{\ref{le:Reduction}}}{=} (r_1, \ldots , r_n,r_{i_k}, r_{i_k}, r_{i_1}, r_{i_1}, \ldots,r_{i_{k-1}}, r_{i_{k-1}})\\
{} & \stackrel{\phantom{\ref{le:Reduction}}}{\sim} (r_1, \ldots , r_n, r_{i_1}, r_{i_1}, \ldots,r_{i_{k-1}}, r_{i_{k-1}},r_{i_k}, r_{i_k})\\
{} & \stackrel{\phantom{\ref{le:Reduction}}}{\sim} (r_1', \ldots, r_{n+2k}').
\end{align*}
\end{proof}

\begin{corollary}
If all edges in the Coxeter graph of $(W,S)$ have odd labels, then two reflection factorizations of the same length of a Coxeter element in $W$ lie in the same Hurwitz orbit.
\end{corollary}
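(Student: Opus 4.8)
The plan is to deduce the corollary directly from Theorem~\ref{thm:Main} by showing that the hypothesis forces the set of reflections $T$ to consist of a single conjugacy class. Once this is established, any two reflection factorizations of a Coxeter element having the same length $m$ automatically share the same multiset of conjugacy classes --- namely $m$ copies of the unique class of reflections --- so Theorem~\ref{thm:Main} immediately yields that they lie in the same Hurwitz orbit. In other words, the whole point is to convert the hypothesis on edge labels into the statement ``all reflections are conjugate'', after which no further work with the Hurwitz action is needed.

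It therefore remains to prove that all reflections are conjugate, and here I would use that the Coxeter graph is connected (for a reducible system the statement in fact fails: in $W=\langle s\rangle\times\langle t\rangle$ with $m_{st}=2$ the tuples $(s,s,s,t)$ and $(s,t,t,t)$ both factor the Coxeter element $st$ and have the same length, yet they carry different multisets of conjugacy classes, so by Theorem~\ref{thm:Main} they are not Hurwitz equivalent). First I would treat two adjacent simple reflections $s_i,s_j$ whose edge label $m_{ij}$ is odd: they generate a dihedral subgroup $\langle s_i,s_j\rangle$ of order $2m_{ij}$ inside $W$, and since $m_{ij}$ is odd the cyclic subgroup of rotations, of odd order $m_{ij}$, acts transitively by conjugation on the $m_{ij}$ reflections of this dihedral group; in particular $s_i$ and $s_j$ are conjugate in $W$. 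As every edge of the connected Coxeter graph carries an odd label, walking along paths shows that all elements of $S$ are pairwise conjugate. Finally, by the very definition $T=\{wsw^{-1}\mid w\in W,\ s\in S\}$, every reflection is conjugate to a simple one, whence $T$ is a single conjugacy class.

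Combining the two steps finishes the argument. The only genuinely nontrivial point --- and the step I expect to be the main obstacle --- is the transitivity of conjugation on the reflections of a dihedral group of order $2m$ with $m$ odd; everything else is bookkeeping together with the reduction to Theorem~\ref{thm:Main}. I would also be careful to make the irreducibility (connectedness) assumption explicit, since it is exactly what guarantees the single conjugacy class and hence the equivalence of ``same length'' with ``same multiset of conjugacy classes''.
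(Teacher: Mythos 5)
Your proof is correct and is essentially the paper's own argument: the published proof consists precisely of the claim that odd labels force all reflections in $T$ to be conjugate, followed by an appeal to Theorem~\ref{thm:Main}; you have simply supplied the dihedral-rotation and path-connectivity details that the paper leaves unstated.

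What deserves emphasis is your side remark on connectedness, because it is a genuine correction rather than mere caution. With the standard convention (as in the paper's reference \cite{Hu90}), the Coxeter graph has an edge between $s_i$ and $s_j$ only when $m_{ij}\geq 3$, so for $W=A_1\times A_1$ (where $m_{st}=2$) the hypothesis that all edges have odd labels is vacuously true while the conclusion fails: your tuples $(s,s,s,t)$ and $(s,t,t,t)$ are length-$4$ reflection factorizations of the Coxeter element $st$ with distinct multisets of conjugacy classes, hence lie in distinct Hurwitz orbits (this direction is immediate, since Hurwitz moves preserve the multiset of conjugacy classes). Accordingly, the first sentence of the paper's proof --- that odd labels alone imply all reflections are conjugate --- is false for disconnected Coxeter graphs; what is true is that $T$ forms a single conjugacy class if and only if the graph on $S$ whose edges are the odd-labeled ones is connected. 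The corollary should therefore be stated for irreducible $(W,S)$, or under the hypothesis that any two simple reflections are joined by a path of odd-labeled edges, which is exactly the repair your write-up makes explicit.
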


\begin{proof}
If all labels in the Coxeter graph of $(W,S)$ are odd, then all reflections in $T$ are conjugated. Therefore the assertion follows by Theorem \ref{thm:Main}. 
\end{proof}

\end{document}